\newtheorem{thm}{Theorem} 
\newtheorem{lemma}{Lemma}  
\newtheorem{rem}{Remark} \def\illustration #1 by #2 (#3) (#4){\leavevmode
  \vbox to #2{
   \hrule width #1 height 0pt depth 0pt
   \vfill
   \special{illustration #3 scaled #4}}} 
\begin{document}

\title{Arnold Tongues in Area-Preserving Maps} 
 \author{ Mark Levi\thanks{Research partially supported by DMS-1909200. Email: mxl48@psu.edu, jingzhou@psu.edu}  \and Jing Zhou 
     } 
\date{%
    Department of Mathematics, Penn State University, Pennsylvania, USA\\[2ex]%
    \today
}

\bibliographystyle{plain} 
\maketitle

\begin{abstract}
In the early 60's J. B. Keller and D. Levy discovered a fundamental property:  the instability tongues in Mathieu-type equations lose sharpness with the addition of higher-frequency harmonics in the Mathieu potentials.  20 years later V. Arnold rediscovered  a similar  phenomenon on sharpness of Arnold tongues in circle maps (and rediscovered the result of Keller and Levy).  In this paper we find a third class of objects where a similarly flavored behavior takes place: area-preserving maps of the cylinder.  Speaking loosely, we show that periodic orbits of  standard maps are extra fragile with respect to added drift (i.e. non-exactness) if the potential of the map is a trigonometric polynomial. That is, higher-frequency harmonics make periodic orbits more robust with respect to ``drift". The observation was motivated by the study of traveling waves in the discretized sine-Gordon equation which in turn models a wide variety of physical systems. 
\end{abstract}


\section{Introduction}
  Understanding invariant sets of area-preserving maps is one of the central problems of dynamics, and one of the most studied -  starting with Poincar\'e's geometrical theorem \cite{poincare} \cite{birkhoff} \cite{franks1988}, through KAM theory \cite{moser1962} \cite{arnold1988} \cite{arnold1989} and the Aubry-Mather theory \cite{mather1982} \cite{aubry1983} \cite{mather1991}. All the results require the exactness assumption. 

Much less is known about area-preserving maps which are non-exact, such as the maps $\varphi$  of the cylinder  $ {\mathbb S}  \times {\mathbb R}  $ with a ``drift": 
\begin{equation} 
	\int_{\varphi (\gamma) } v \;dx - \int_\gamma  {v \;dx} = \delta\not=0 ,  
	\label{eq:nonexact}
\end{equation} 
were $\gamma$ is a closed curve encircling the cylinder $ {\mathbb R}\,{\rm mod} 1 \times {\mathbb R}  $  once, Figure \ref{fig:cylinder}. 

\begin{figure}[thb]
 	\captionsetup{format=hang}  
	\center{  \includegraphics{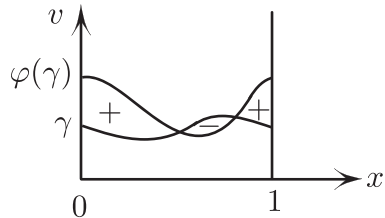}}
	\caption{Non-exact area-preserving cylinder map with $ \delta > 0 $. }  
	\label{fig:cylinder}
\end{figure}

Such maps are ubiquitous in Hamiltonian dynamics, arising in numerous settings.   We mention four examples. \\

 {\bf 1. The Frenkel-Kontorova model} of electrons in a chrystal lattice \cite{BraunKivsha} \cite{levi1990}.  
The model consists of an infinite chain of particles on the line a periodic potential $V(x)$ and with nearest neighbor coupling. The equilibria are the critical points  of the total energy 
\[
	\sum_{i\in {\mathbb Z}  } \frac{k}{2} (x_{i+1}-x_i) ^2 + V(x_i); 
\]  
although the sum is divergent, the variational equation, i.e. the equilibrium condition 
\begin{equation} 
	x_{i+1}-2x_i+x_{i-1} + k ^{-1}  V ^\prime (x_i )   
	\label{eq:FK}
\end{equation}  
is well defined. This  discrete analog of the Euler-Lagrange equation has a Hamiltonian counterpart obtained by the  introduction of  the discrete momentum  $v_i=x_i-x_{i-1}$: 
\begin{equation}  
   \left\{ \begin{array}{l} 
    x_{i+1} = x_i+v_i-V ^\prime (x_i)  \\[3pt] 
    v_{i+1} =v_i-V ^\prime (x_i)  \end{array}, \right.   
	\label{eq:standardmap}
\end{equation}  
 an area-preserving map.

If $ V ^\prime $ is periodic, then   (\ref{eq:standardmap})  defines a cylinder map. But a periodic $ V ^\prime $ leaves a possibility that  $V$ itself may have a ``tilt", i.e. a   linear part: 
\[
	V(x) = a \;x + V_{\rm periodic}(x), 
\] 
where   $V_{\rm periodic}(x+T)=V_{\rm periodic}(x)$ for some fixed $ T $.

The tilt causes the  map   (\ref{eq:standardmap})  to be non-exact:    (\ref{eq:nonexact})  holds for this map with $ \delta = -aT $. \\
\begin{figure}[thb]
 	\captionsetup{format=hang}  
	\center{  \includegraphics{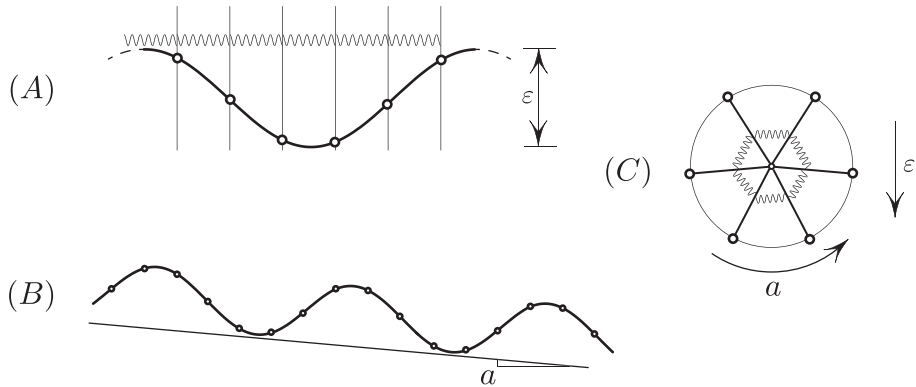}}
	\caption{(A): the  Frenkel-Kontorova model; (B): the tilt added, leading to the non-exact cylinder map; (C): tilt interpreted as torque acting on coupled pendula. }  
	\label{fig:FK}
\end{figure}

{\bf 2. Chains of coupled pendula.} In the special case of $ V(x) = c \sin x $ the Frenkel-Kontorova model has a mechanical interpretation as the chain of   torsionally coupled pendula (Figure \ref{fig:pendula});  here  $ x_i $ denotes the angle of the  $i$th pendulum with the downward vertical.  Now if each pendulum is subjected to a constant torque $ \tau $ then the potential acquires a linear part: $ V(x) = c \sin  x + \tau x $, and the corresponding cylinder map becomes non-exact, with $ \delta = - 2  \pi\tau $. 

\begin{figure}[thb]
 	\captionsetup{format=hang}  
	\center{  \includegraphics{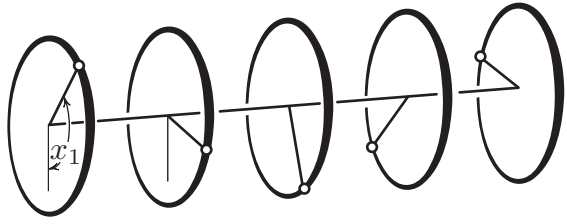}}
	\caption{Discretized sine-Gordon equation: pendula with nearest-neighbor torsional coupling.}  
	\label{fig:pendula}
\end{figure}

\vskip 0.1 in 

{\bf 3. Coupled Josephson junctions.} A Josephson junction consists of two superconductors separated by a narrow gap of a few angstroms \cite{JOSEPHSON1962}. The junction can behave as a superconductor or as a resistor, depending on the initial conditions (there were some hopes in the 1970s to use this property as a memory device). This behavior reminds of a pendulum with torque $ \delta $ described by
\begin{equation}
	\ddot x + c \dot x + \sin x = \delta  ; 
	\label{eq:pt}
\end{equation}
for the  $|\delta| <1$ there are two stable limiting regimes: either the stable equilibrium or the ``running" periodic solution corresponding to the tumbling motion: $x= \omega t+p(t)$ where $p$ is periodic. And in fact the same equation
(\ref{eq:pt})  is satisfied by the jump $ x =\arg \theta _2-\arg \theta _1 $ of the phase of the electron wave function across the gap if current $\delta$ is driven across the gap, Figure \ref{fig:josephson}.   The voltage across the gap is proportional to  the average $\langle\dot x \rangle$, or the average angular velocity in the pendulum interpretation.  
Thus the equilibrium solution with its average $\langle\dot x \rangle=0$ corresponds to zero voltage and thus to the superconducting state, while the tumbling solution with the voltage
$\langle\dot x \rangle\not=0$ corresponds to the resistive state. 
\begin{figure}[thb]
 	\captionsetup{format=hang}  
	\center{  \includegraphics{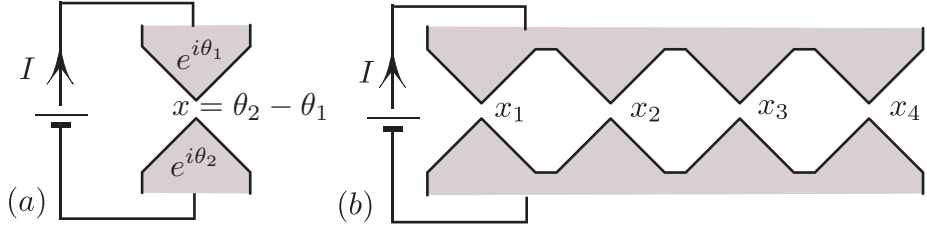}}
	\caption{Josephson junctions: single and coupled. Voltage across the junction is proportional to $ \langle \dot x \rangle  $.}  
	\label{fig:josephson}
\end{figure}
\vskip 0.1 in 
{\bf 4. Particle in $ {\mathbb R}  $  subject to a force that varies periodically both in time and position.} The motion of such a particle is governed by 
\[
	\ddot x = \Phi(x,t), 
\]  
where $ \Phi $ is periodic in both variables of period $1$ (without loss of generality). The Poincar\'e map $\varphi: (x, y)_{t=0}\mapsto (x, y)_{t=1} $, where $ y = \dot x $,   is generally non-exact, satisfying   (\ref{eq:nonexact})  with the drift equal to the average of force $\Phi$: 
\[
	\delta = \int_{0}^{1} \int_{0}^{1} \Phi(x,t)\;dx\;dt = \langle\Phi\rangle.
\]  
 \vskip 0.1 in 
 This completes our list of examples where non-exact area-preserving cylinder maps aries. \vskip 0.1 in 
 In this paper we show that periodic orbits of the standard map (\ref{eq:standardmap})  are extra sensitive to the added drift $\delta$ if the potential has harmonics of only low frequencies. There are (at least) two known phenomena with a similar flavor: (i) the sharpness of Arnold tongues in circle maps \cite{Arnold1983} 
\[x\mapsto x+ \omega + \varepsilon f(x) \]
where $f$ is a trigonometric polynomial is related to the degree of  $f$, and (ii) sharpness of  resonance zones in Hill's equations 
\[ \ddot x + (\omega ^2 + \varepsilon  q(t) ) x=0 \]
where $ q $ is a trigonometric polynomial 
is related to the degree of $ q $   \cite{LeKe}.  The present paper adds one more example to this list. According to Arnold \cite{Arnold1983}, Gelfand conjectured the existence of a general theorem which covers cases (i) and (ii); to this conjecture one can add the problem studied in the present paper. 

\vskip 10pt
\section{Results}
We consider periodic potentials with a linear part added: 

\[
   V(x) = \delta \;x + \varepsilon F(x), \ \ F(x+2\pi)=F(x),  
\]
so that the standard map (\ref{eq:standardmap}) with such $ V $ takes form
\begin{equation}  
   \left\{ \begin{array}{l} 
    x_{i+1} = x_i+v_i-\delta - \varepsilon f(x_i)  \\[3pt] 
    v_{i+1} =v_i-\delta - \varepsilon f(x_i)  \end{array} \right.   
	\label{eq:cylindermap}
\end{equation}  
where $f(x)=F'(x)$ is periodic of period $2\pi$.

For  $ \delta =0 $ the cylinder map (\ref{eq:cylindermap}) is exact, and it possesses a $p/q$ periodic orbit for any integer $ p, \ q\not=0 $, i.e. an orbit satisfying 
\[
   x_{i+q} = x_i + 2p\pi, \quad v_{i+q} = v_i;
\]  this  follows from Poincar\'e's Last Geometric Theorem as generalized by Franks \cite{franks1988}. In his generalization   Franks removed the requirement of  invariant boundary circles of an annulus, replacing it with   the assumption of exactness. Since the theorem no longer applies to the non-exact case  $ \delta \not=0$, a natural question arises: for what range of drift $ \delta $ do periodic orbits persist?    We show that if $V$ is a trigonometric polynomial  then this range becomes narrower if the degree of the trigonometric polynomial $f$  becomes smaller; and furthermore,  the tightness of the range is exponentially small in terms of the period $ q $.     More precisely, one has the following. 

\begin{thm}[Width of Arnold tongues]\label{thm:arnoldtongue} 
Let   $f(x)$  in  (\ref{eq:cylindermap}) be a trigonometric polynomial of degree $d$, and let $ p\geq 0 $, $ q>0 $ be integers. There exist positive constants $ \overline{\varepsilon}  $ and $ c  $ depending only on $q$ and $f$, such that for   any $0<\varepsilon<\overline{\varepsilon}$, all $ p/q $ periodic orbits of (\ref{eq:cylindermap}) disappear if the drift $|\delta |> c  \varepsilon^{[q/d]}$; here  $ [\cdot] $ denotes the integer part. 
\end{thm}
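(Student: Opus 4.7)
The plan is a Lyapunov--Schmidt reduction on the variational form of (\ref{eq:cylindermap}), followed by a degree count and a mean-zero identity from the action structure.

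Eliminating $v_i$ from (\ref{eq:cylindermap}) shows that a $p/q$ periodic orbit is a solution of the discrete Euler--Lagrange equation
\[
x_{i+1} - 2 x_i + x_{i-1} + \varepsilon f(x_i) + \delta = 0, \qquad x_{i+q} = x_i + 2\pi p.
\]
Writing $x_i = \bar{y} + (2\pi p/q) i + \tilde{y}_i$ with $\tilde{y}$ of period $q$ and zero mean, and summing the equation over one period eliminates the discrete Laplacian and yields the scalar bifurcation equation
\[
\delta = -\frac{\varepsilon}{q} \sum_{i=0}^{q-1} f\!\left(\bar{y} + \tfrac{2\pi p}{q} i + \tilde{y}_i\right) =: \Psi(\bar{y},\varepsilon).
\]
The remaining ``range'' equations invert the discrete Laplacian on the zero-mean subspace of $q$-periodic sequences, where its eigenvalues $-4\sin^2(\pi k/q)$, $k = 1,\dots,q-1$, are all nonzero (assume $\gcd(p,q)=1$ throughout; the case $d\ge q$ makes the stated bound trivial). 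Since $f$ is entire, the analytic implicit function theorem produces a unique analytic solution $\tilde{y}^*(\bar{y},\varepsilon)$ for $|\varepsilon|<\bar{\varepsilon}$, automatically $2\pi$-periodic in $\bar{y}$. Existence of $p/q$ orbits is equivalent to $\delta$ lying in the range of $\bar{y}\mapsto\Psi(\bar{y},\varepsilon)$, so the theorem reduces to the sup-norm estimate $\sup_{\bar{y}}|\Psi(\bar{y},\varepsilon)|\le c\,\varepsilon^{[q/d]}$.

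Next I would expand $\Psi(\bar{y},\varepsilon)=\sum_{n\ge 1}\varepsilon^n\Psi^{(n)}(\bar{y})$. A straightforward induction on the range equation shows that $\tilde{y}_i^{*(n)}(\bar{y}) = g_n(\bar{y}+(2\pi p/q)i)$ for some trigonometric polynomial $g_n$ of degree at most $nd$: each order of the recursion multiplies by one additional derivative of $f$, which has degree $d$, and $L^{-1}$ (diagonal in the basis $e^{ik(\bar{y}+(2\pi p/q)i)}$) preserves this structure. Consequently $\Psi^{(n)}(\bar{y})$ is a trigonometric polynomial in $\bar{y}$ of degree at most $nd$. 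Moreover, the averaging $\frac{1}{q}\sum_{i=0}^{q-1} e^{ik(\bar{y}+(2\pi p/q)i)}$ annihilates every Fourier mode with $q\nmid k$ (using $\gcd(p,q)=1$). Hence whenever $nd<q$, $\Psi^{(n)}(\bar{y})$ retains only the $k=0$ mode, i.e.\ it is independent of $\bar{y}$.

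The final step is to show that these constants vanish, which comes from the variational structure. Define the reduced action along the critical branch,
\[
A(\bar{y}) := \sum_{i=0}^{q-1}\left[\tfrac{1}{2}(x_{i+1}^* - x_i^*)^2 - \varepsilon F(x_i^*)\right], \qquad x_i^* := \bar{y} + \tfrac{2\pi p}{q} i + \tilde{y}_i^*(\bar{y},\varepsilon).
\]
Because $F$ is $2\pi$-periodic and $\tilde{y}^*$ is $2\pi$-periodic in $\bar{y}$, the function $A$ is $2\pi$-periodic. An envelope-type computation, using $\partial_{x_j}\!\sum L_0 = \delta$ on the orbit (the Euler--Lagrange equation with forcing) together with $\sum_i\tilde{y}_i^*\equiv 0$, yields $A'(\bar{y}) = q\,\Psi(\bar{y},\varepsilon)$. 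Integrating over one period then forces $\int_0^{2\pi}\Psi\,d\bar{y}=0$, so every $\Psi^{(n)}$ has zero mean. Combined with the previous paragraph, $\Psi^{(n)}\equiv 0$ whenever $nd<q$; the first possibly nonzero term therefore occurs at $n=\lceil q/d\rceil \ge [q/d]$, and the estimate $|\Psi|\le c\,\varepsilon^{[q/d]}$ follows for $|\varepsilon|<\bar{\varepsilon}$ by analyticity. The principal obstacle in this plan is bookkeeping: rigorously propagating the degree bound on $\tilde{y}^{*(n)}$ through the nonlinear recursion and arranging that the IFT constants (both $\bar{\varepsilon}$ and uniform bounds on Taylor coefficients) can be chosen uniformly in $\bar{y}$.
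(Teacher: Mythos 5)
Your proposal is correct in outline, and it reaches the theorem by a genuinely different route than the paper. The paper works with the $q$-th iterate of the cylinder map, solves $_qR=\,_qS=0$ by the implicit function theorem for $\delta=\Delta(x_0,\varepsilon)$ and $y=Y(x_0,\varepsilon)$ (Theorem \ref{thm:ycurve}), proves separately that the leading $x$-dependent coefficient $\Delta_r$ is $\mu$-periodic (Lemma \ref{lemma:periodicity}, via a delicate comparison of the orbits for $\delta=\Delta(x_0)$ and $\delta=A(\varepsilon)$), counts trigonometric degrees, and anchors the tongue at $\delta=0$ by using exactness there (Franks' theorem) to control the constant part $A(\varepsilon)$ of the expansion (\ref{eq:expdelta}). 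You instead pass to the variational (Frenkel--Kontorova) form, perform Lyapunov--Schmidt with the discrete Laplacian, and obtain the bifurcation function $\Psi(\bar y,\varepsilon)=-\frac{\varepsilon}{q}\sum_i f(x_i^*)$ explicitly as an average over the phases $\bar y+\mu i$; the equivariance $\tilde y_i^*(\bar y)=g(\bar y+\mu i,\varepsilon)$ makes the paper's Periodicity Lemma automatic (the average annihilates every harmonic $k$ with $q\nmid k$), and the reduced-action identity $\frac{d}{d\bar y}\bigl[\textstyle\sum_i(\tfrac12(x_{i+1}^*-x_i^*)^2-\varepsilon F(x_i^*))\bigr]=q\,\Psi(\bar y,\varepsilon)$ together with $2\pi$-periodicity in $\bar y$ forces every coefficient $\Psi^{(n)}$ to have zero mean, which kills the constants below order $\lceil q/d\rceil$; thus you avoid both the orbit-comparison argument and the appeal to Franks' theorem, and in fact get the slightly sharper conclusion that the tongue is centered at $\delta=O(\varepsilon^{\lceil q/d\rceil})$ with the sub-threshold coefficients vanishing identically (consistent with the paper's first-order computation $\Delta_1=-\overline f$, which vanishes when $d<q$). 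Two points should be made explicit in a write-up: (i) the equivalence ``$p/q$ orbits exist $\Leftrightarrow$ $\delta\in\Psi(\mathbb R,\varepsilon)$'' requires an a priori bound guaranteeing that \emph{every} $p/q$ orbit is captured by the IFT branch --- note that no periodic orbit exists at all when $|\delta|>\varepsilon\max|f|$ (then $v_i$ is strictly monotone), and for $|\delta|\le\varepsilon\max|f|$ inverting $L$ on the zero-mean subspace gives $\tilde y=O(\varepsilon)$, so IFT uniqueness (made uniform in $\bar y$ by periodicity/compactness, as in the paper) applies; (ii) the inversion of $L$ ``in the basis $e^{ik(\bar y+\mu i)}$'' is legitimate only after the zero-mean projection, since modes with $q\mid k$ are constant in $i$ and lie in the kernel --- fortunately the projection removes exactly those modes, so the inductive degree bound $\deg g_n\le nd$ goes through; finally, like the paper, your degree argument implicitly takes $p/q$ in lowest terms.
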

\begin{figure}[thb]
 	\captionsetup{format=hang}  
	\center{  \includegraphics{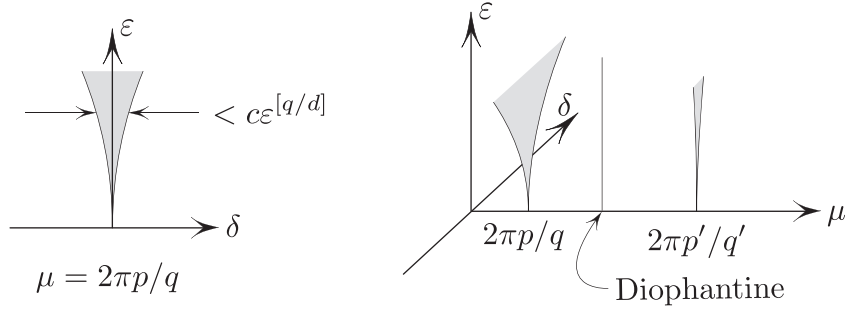}}
	\caption{Left: Arnold tongue for the Birkhoff periodic point with the rotation number $ \mu = p/q $ is exponentially narrow for trigonometric polynomials. Right: for Diophantine $ \mu $ one has an invariant KAM curve only for the drift $ \delta = 0 $, i.e. the tongue has zero width.}  
	\label{fig:arnoldtongue}
\end{figure} 
The other observation of this paper is that the $ p/q $ periodic points move on special curves as $ \delta $ changes.

\begin{thm}\label{thm:ycurve} 
Let $p>0$, $q>0$ be integers. If the perturbation term $f(x)$ in the cylinder map (\ref{eq:cylindermap}) is a $2\pi$-periodic analytic function ({\rm not necessarily a trigonometric polynomial}), then there exists a positive constant $\overline{\overline{\varepsilon}}$ depending only on $q$ and $f$, such that for any $|\varepsilon|<\overline{\overline{\varepsilon}}$ and $|\delta|<\overline{\overline{\varepsilon}}$, any $p/q$ periodic orbit of the perturbed map (\ref{eq:cylindermap}), if exists, lies on the graph of the function 
\begin{equation*}
    v=\mu + v_1(x)\varepsilon + v_2(x)\varepsilon^2+\cdots
\end{equation*}
where $ \mu = 2\pi p/q $ and  $v_n(x)$ is an $n^{\rm th}$ degree polynomial in $f(x+k\mu)$ ($0\le k\le q-1$) and their derivatives up to order $n-1$. In particular, 
\[
   v_1(x)=-\frac{q+1}{2}\overline{f}(x) + \overline{\overline{f}}(x),
\]
where 
\[
   \overline{f}(x) = \dfrac{1}{q} \sum_{k=0}^{q-1} f\left( x+k \mu  \right),
\]
\begin{equation*}
    {\overline{\overline f}}(x)=\dfrac{1}{q} \sum_{k=0}^{q-1} (q-k) f\left( x+k \mu  \right).
\end{equation*}

\end{thm}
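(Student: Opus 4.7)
The plan is to iterate the map $q$ times explicitly, use the two periodicity conditions to eliminate $\delta$, and then solve for $v$ as an analytic function of $x$ via the implicit function theorem. Starting from $(x_0,v_0)$ and using that $x_{i+1}-x_i = v_{i+1}$, one obtains by telescoping the closed-form iterates
\begin{align*}
v_k &= v_0 - k\delta - \varepsilon\sum_{j=0}^{k-1} f(x_j), \\
x_k &= x_0 + k v_0 - \tfrac{k(k+1)}{2}\delta - \varepsilon\sum_{j=0}^{k-1}(k-j)f(x_j).
\end{align*}
Setting $k=q$, the periodicity conditions $v_q = v_0$ and $x_q=x_0+2\pi p$ read
\begin{equation*}
q\delta + \varepsilon \sum_{j=0}^{q-1} f(x_j) = 0,
\qquad
v_0 = \mu + \tfrac{q+1}{2}\delta + \tfrac{\varepsilon}{q}\sum_{j=0}^{q-1}(q-j)f(x_j).
\end{equation*}

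Using the first to eliminate $\delta$ from the second yields the fundamental implicit equation
\begin{equation*}
v_0 \;=\; \mu \;+\; \frac{\varepsilon}{2q}\sum_{j=0}^{q-1}(q-2j-1)\,f(x_j), \qquad (\star)
\end{equation*}
where each $x_j$ depends on $(x_0,v_0,\varepsilon,\delta)$ via the iteration above (with $\delta$ in turn a function of $(x_0,v_0,\varepsilon)$ by the first equation). At $\varepsilon=0$ the equation $(\star)$ reduces to $v_0=\mu$, and $\partial_{v_0}$ of its right-hand side vanishes at $\varepsilon=0$; hence the $v_0$-derivative of $(\star)-v_0$ equals $-1$ at the base point. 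Since $f$ is analytic in a complex strip and the iterates $x_j$ are finite compositions of analytic operations, the analytic implicit function theorem produces a unique analytic solution $v_0 = \phi(x_0,\varepsilon)$ for $|\varepsilon|$ small. Because any $p/q$-periodic orbit, whatever $\delta$ it corresponds to, necessarily satisfies $(\star)$ at each of its points (the derivation used only the two periodicity relations, which are equally valid starting from any point of the orbit), uniqueness forces every such orbit to lie on the graph of $\phi$.

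To obtain the explicit form of $v_1$, substitute the leading-order iterates $x_j = x_0 + j\mu + O(\varepsilon)$ into $(\star)$ and read off the coefficient of $\varepsilon$:
\begin{equation*}
v_1(x) \;=\; \frac{1}{2q}\sum_{j=0}^{q-1}(q-2j-1)\,f(x+j\mu) \;=\; -\tfrac{q+1}{2}\,\overline{f}(x) \;+\; \overline{\overline{f}}(x),
\end{equation*}
as claimed. The general polynomial structure of $v_n$ follows by induction: writing $x_j = x_0 + j\mu + \sum_{k\ge1}\varepsilon^k \xi_j^{(k)}(x_0)$ and Taylor-expanding $f(x_j) = \sum_{m\ge 0}\frac{1}{m!}f^{(m)}(x_0+j\mu)\,\big(\sum_{k\ge 1}\varepsilon^k\xi_j^{(k)}\big)^m$, one checks order by order that $\xi_j^{(k)}$ is a polynomial of degree $\le k$ in the symbols $f^{(m)}(x_0+\ell\mu)$ with $0\le\ell<q$, $0\le m\le k-1$, and hence, from $(\star)$ evaluated at order $\varepsilon^n$, so is $v_n$ with the degree and derivative orders as stated. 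The main bookkeeping obstacle is precisely this inductive degree/derivative-order tracking, but it reduces to a finite combinatorial count since $q$ is fixed and each order involves only finitely many algebraic operations; the convergence of the formal series is automatic from the analytic implicit function theorem already invoked.
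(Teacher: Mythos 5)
Your proposal is correct and follows essentially the same route as the paper: write the $q$-fold iterate explicitly, impose the two periodicity conditions $v_q=v_0$, $x_q=x_0+2\pi p$, solve by the analytic implicit function theorem, and then track by induction in powers of $\varepsilon$ the polynomial degree and derivative order in the shifted quantities $f^{(l)}(x+k\mu)$, arriving at the same $v_1=-\frac{q+1}{2}\overline{f}+\overline{\overline{f}}$. The only organizational difference is that you eliminate $\delta$ first and apply a scalar implicit function theorem to your equation $(\star)$, whereas the paper applies the two-dimensional implicit function theorem to ${}_qR={}_qS=0$ (Jacobian determinant $q^2$) and runs the induction through its structural lemma on the remainders; this is a cosmetic reshuffling rather than a different argument.
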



\begin{rem}
   If $f$ is a trigonometric polynomial, then  $ {\overline f}$ and ${\overline {\overline f}} $ are  trigonometric polynomials as well, of degree not exceeding the degree $d$ of  $f$. Moreover, if $ d< q $ then $ \overline{f}=0 $. 
\end{rem}

 \begin{rem}  
 Birkhoff $ p/q $ periodic orbits  come in saddle-center pairs. As $ \delta  $ increases from $0$ to a critical value, a pair  disappears in a saddle-node bifurcation, Figure \ref{fig:channel}.  The first theorem therefore states that   critical values of $ \delta $  are $O(\varepsilon^{[q/d]})$. 
 \end{rem} 
 
 \begin{figure}[thb]
 	\captionsetup{format=hang}  
	\center{  \includegraphics{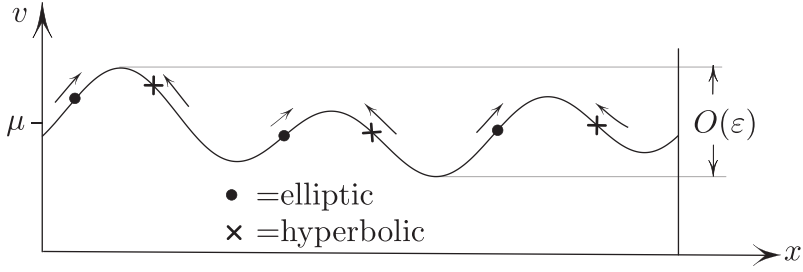}}
	\caption{Iterates of two periodic points (a center and a saddle). The arrows show the direction of motion of the iterates as $\delta$ increases. The two orbits disappear in a saddle-node bifurcation. As $ \delta $ changes, these points move with large speed (at least) $ O(\varepsilon^{-[q/d]}) $ (at bifurcation the speed becomes infinite).}  
	\label{fig:channel}
\end{figure} 

 \begin{rem}  
 A special case of Theorem \ref{thm:arnoldtongue} for $ q  \leq 3 $   was proven in \cite{saadatpour-levi} by a direct calculation. Unfortunately as $q$ increases, the complexity of this calculation tends to infinity. We overcome this problem by extending Arnold's approach \cite{Arnold1983} (that he used for circle maps) to the maps of the cylinder. 
 \end{rem} 
 
 \vskip 0.1 in 
\begin{rem} (An implication of Theorem \ref{thm:arnoldtongue} for traveling waves.) Consider an infinite periodic chain of pendula governed by the discretized sine-Gordon equation with damping: 
\begin{equation} 
	\ddot x_{k} + \gamma  \dot x_k+ \varepsilon  \sin x_k= (x_{k+1}-2x_k+x_{k-1}) + \delta.  
	\label{eq:sg}
\end{equation}   
Fixing $ q\in {\mathbb N}  $ and the ``twist" $p\in {\mathbb Z}  $, consider space-periodic ``twisted" solutions, i.e. the ones satisfying  
\begin{equation} 
	x_{k+q} (t)=x_k(t)  + 2 \pi p \  \  \hbox{for all}  \  \  t. 
	\label{eq:per}
\end{equation}   
According to \cite{saadatpour-levi} there exists a constant $ \varepsilon_0= \varepsilon_0( \gamma ) $ depending only on the damping coefficient $\gamma > 0 $ such that for all $ \varepsilon \in (0, \varepsilon_0)$ and for all $\delta$   every solution approaches asymptotically   either to an equilibrium or to a   traveling wave.  This wave satisfies
\[
 	x_k(t) = x_{k-1}(t+T/q) , 
\]  
(that is, each pendulum repeats what its neighbor is doing but with a delay); $q$-fold application of this identity gives 
\[
	x_k(t) = x_{k-q}(t+T) \buildrel{  (\ref{eq:per})}\over{=} x_k(t+T) - 2 \pi p, 
\]  
i.e. 
\[
	 x_k(t+T) = x_k(t) + 2 \pi p, 
\]
so that this solution is periodic modulo rotation $ x_k\mapsto x_k+ 2 \pi p $ ($k=1, \ldots, q) $ in $   {\mathbb S} = {\mathbb R}\mod 2\pi$.  Now according to Theorem \ref{thm:arnoldtongue} the equilibria of  (\ref{eq:sg})  disappear if $ \delta > c_0 \varepsilon ^q $ (a threshhold exponentially small in the number of pendula), and thus {\it    (\ref{eq:sg})  has  a globally attracting periodic traveling wave  for all such $\delta$. This traveling wave appears as the result of a saddle-node bifurcation of the equilibria which exist for smaller $ \delta $. }

\end{rem}
\vskip 0.1 in

\vskip 0.1 in

\noindent {\bf A remarkable fragility}  of equilibria is illustrated in Figure \ref{fig:FK}(C) showing an equilibrium of  $ q=6 $  coupled pendula with the same torque   applied to each.  With the choice of ``gravity'' $ \varepsilon = \frac{1}{2} $ the pendula  sag by a comparable amount and one might expect that the equilibrium could withstand the torque of a comparable magnitude. Yet, the equilibrium (and hence the corresponding periodic orbit of the map) disappears for  $ \delta > 0.0027 $, about $ .05\% $ of the ``gravity" $\varepsilon$!  Sinusoidal potentials are thus quite special: they are remarkably bad at pinning down the equilibria. 

\vskip 0.1 in 
The effect of non-exactness on the dynamics of the map is of interest in itself; its understanding would also shed light on physical effects, such as the fragility of Frenkel-Kontorova equilibria in crystals to imposed voltages.


\section{The Preliminaries}
The cylinder map (\ref{eq:cylindermap}) with $ \varepsilon = \delta = 0 $ has an invariant circle $ v=2 \pi p/q \buildrel {\rm def} \over = \mu$   consisting of $ p/q$ periodic points. This suggests     introducing a shifted momentum
$$  y := v - \mu  $$ 
with which  the cylinder map (\ref{eq:cylindermap}) takes a new form
\begin{equation}\label{map:main}
    \begin{cases}
    x_{i+1} = x_i + y_i + \mu + g(x_i;\varepsilon,\delta)\\
    y_{i+1} = y_i + g(x_i;\varepsilon,\delta)
    \end{cases}
\end{equation}
where 
$$g(x;\varepsilon,\delta):=-\delta-\varepsilon f(x),$$
and $f(x)$ is a $2\pi$-periodic analytic function as in Theorem \ref{thm:ycurve}.  

\medskip

This map (\ref{map:main}) restricted to a neighborhood of the circle $ y=0 $ can be viewed as a perturbation of the shift by $\mu$ in the  $x$ direction; the $ n $th iterate will thus be a perturbation of the shift by $ n \mu $ in the $x$-direction.  This suggests writing the $n$-th iterate of (\ref{map:main})   in the form
\begin{equation}\label{remainder}
    \begin{cases}
    x_n = x_0 + n \mu + \, _nR(x_0,y_0,\varepsilon,\delta)\\
    y_n = y_0 + \, _nS(x_0,y_0,\varepsilon,\delta);
    \end{cases}
\end{equation}
 a quick calculation shows that   
\begin{equation}\label{nRnS}
\begin{split}
    _nR(x_0,y_0,\varepsilon,\delta) &= n\left(y_0 + g(x_0)\right) + (n-1)g(x_1) + \cdots + 2g(x_{n-2}) + g(x_{n-1})\\
     _nS(x_0,y_0,\varepsilon,\delta) &= g(x_0) + g(x_1) + \cdots + g(x_{n-2}) + g(x_{n-1})
\end{split}
\end{equation}
where $\{x_0,x_1,\cdots,x_{n-1}\}$ are the $ x$-coordinates of the iterates of $ (x_0,y_0) $ under the cylinder map (\ref{map:main}).\\

The cylinder map (\ref{map:main}) possesses a $p/q$ periodic orbit:  $ x_q=x_0+ \underbrace{2  \pi p}_{q \mu} $, $ y_q=y_0 $  if and only if for some $(x_0,y_0)$ the remainders of the $q$-th iterate vanish: 
\begin{equation}\label{eq:remainders}
    _qR(x_0,y_0,\varepsilon,\delta) = \; _qS(x_0,y_0,\varepsilon,\delta)=0.
\end{equation}

The overall plan of the proof of Theorem 1 (the main result) is as follows.  Vanishing of the remainders (\ref{eq:remainders})  defines $ y_0 $ and $ \delta $ as functions of $ x_0 $ and $ \delta $: $y_0=Y(x_0,\varepsilon),\; \delta=\Delta(x_0,\varepsilon)$;  to prove the narrowness of the Arnold tongue (as specified in Theorem 1) amounts to showing that the range  $ \Delta ( {\mathbb R}, \varepsilon) $ is   $O( \varepsilon^r)$--small, where   $ r=[q/d]$. The proof of this narrow range statement goes as follows. Expanding $ \Delta $ in powers of $ \varepsilon $ we consider the first   $ x_0 $--dependent term: $ \Delta_r(x_0) \varepsilon ^r$. Our goal is to show that $ r>[q/d]$. To that end we prove (i) that  $\Delta_r(x_0) $ is periodic of period $ \mu $, and  (ii) that $ \Delta_r(x_0)$  is a trigonometric polynomial of degree  $ rd $. But a non-constant trigonometric polynomial periodic of period $ \mu = 2  \pi p/q $ must have degree $ > q $, i.e. $ rd>q$, or $ r> [q/d]$ as desired. 



\section{Structure of the remainders}

In this section we examine the $n$-th iterate of the cylinder map (\ref{map:main}) for any $n\in\mathbb{N}$ and the associated remainders.

 For brevity, we write $g(x) = g(x;\varepsilon, \delta)$  and $g'(x) = \frac{\partial}{\partial x}g(x;\varepsilon,\delta)=- \varepsilon f ^\prime (x)$ (recall that $ g(x;\varepsilon,\delta):=-\delta-\varepsilon f(x) $), and also 
  $$g_k^{(r)} := g^{(r)}(x_0+k\mu), \quad g_k^{(0)}=g(x_0+k \mu)=g_k;$$
  $$f_k^{(r)} := f^{(r)}(x_0+k\mu), \quad f_k^{(0)}=f(x_0+k \mu)=f_k.$$

The following lemma gives the structure of the remainders for any iterate of the cylinder map (\ref{map:main}). 

\begin{lemma}\label{lemma:remainders}
The remainders   $_nR$ and  $_nS$ are convergent series:  
$$ _nR=\; _nR_1 + \; _nR_2 + \cdots $$
$$ _nS=\; _nS_1 + \; _nS_2 + \cdots $$
where  $_nR_m$ and  $_nS_m$  are   homogeneous polynomials of degree $ m $ in terms of the items in the list  $\{y_0+g_0,\; g^{(l)}_k \; (0\le k\le n-1,\; 0\le l \le m-1)\}$. The two series converge for all $ x_0,y_0,\varepsilon,\delta$. Moreover, each term of degree $ m\geq 2$  contains at least one derivative of $g$ {\rm (so that, with $ g= - \delta - \varepsilon f(x) $ these terms are $ O( \varepsilon ) $)}.
 In particular, 
\begin{equation}\label{eq:firstorder}
\begin{split}
    _nR_1&= n(y_0+ g_0) + (n-1)g_1 + (n-2)g_2 + \cdots + g_{n-1},\\
    _nS_1&= g_0 + g_1 + \cdots + g_{n-1}.\\
\end{split}
\end{equation}
\end{lemma}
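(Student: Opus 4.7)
The plan is to prove the lemma by induction on $n$, starting from the closed-form expressions (\ref{nRnS}) and Taylor-expanding $g(x_k)$ around the unperturbed abscissa $x_0 + k\mu$ in order to expose the homogeneous-polynomial structure.

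The base case $n=1$ follows directly from (\ref{map:main}): $_1R = y_0 + g_0$ and $_1S = g_0$ are each a single term, homogeneous of degree $1$ in the listed variables, with no contributions at higher degrees. For the induction step, fix $n\ge 2$ and assume the claim for every $k<n$. Writing $x_k = x_0 + k\mu + {}_kR$, I would expand
\begin{equation*}
g(x_k) \;=\; g_k \;+\; \sum_{l \ge 1} \frac{g_k^{(l)}}{l!}\,({}_kR)^l .
\end{equation*}
By the inductive hypothesis, $_kR$ splits into homogeneous pieces $_kR_j$, each a polynomial of degree $j$ in $\{y_0+g_0,\, g_{k'}^{(l')}\}$ with $l'\le j-1$. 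Therefore $({}_kR)^l$ is a sum of homogeneous pieces of degree $\ge l$, and the term $\frac{g_k^{(l)}}{l!}({}_kR)^l$ contributes only at degrees $\ge l+1$. Substituting these expansions into (\ref{nRnS}) and collecting by degree produces $_nR_m$ and $_nS_m$ as the required degree-$m$ polynomials.

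The derivative-order bound $l\le m-1$ at degree $m$ then falls out of the bookkeeping: an outer factor $g_k^{(l)}$ contributing at degree $m$ must satisfy $l\le m-1$, while any inner derivative $g_{k'}^{(l')}$ inherited from some $_kR_j$ with $j\le m-1$ satisfies $l'\le j-1\le m-2$. The explicit formulas for $_nR_1$ and $_nS_1$ arise by retaining only the $l=0$ contribution (which gives $g(x_k)=g_k$ at first order) and substituting into (\ref{nRnS}); the $n(y_0+g_0)$ summand of $_nR$ is already degree $1$. The ``at-least-one-derivative'' property at degree $m\ge 2$ is immediate: the only derivative-free part of $g(x_k)$ is the degree-$1$ term $g_k$, so every higher-degree contribution to $g(x_k)$ contains some $g_k^{(l)}$ with $l\ge 1$, and summation in (\ref{nRnS}) preserves this feature.

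For convergence, periodicity and real-analyticity of $f$ (hence of $g$) yield a holomorphic extension to a strip $|\operatorname{Im} z|<\rho$, so the Taylor series of $g$ about any real point converges in a disk of radius $\rho$. An inductive estimate shows that $|{}_kR|$ is controlled by $|y_0+g_0|$ together with $|\varepsilon|$ and $|\delta|$, so for these quantities small enough all iterates remain in the $\rho$-strip, every Taylor expansion converges absolutely, and the finite additions and multiplications in (\ref{nRnS}) then give convergence of the series for $_nR$ and $_nS$. The only real obstacle I anticipate is the clean bookkeeping of orders of derivatives through the induction to verify $l\le m-1$; once the indexing is set up, the rest is routine Taylor expansion and term collection.
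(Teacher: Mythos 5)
Your proof is correct and takes essentially the same route as the paper's: induction on $n$ with Taylor expansion of $g$ about the shifted points $x_0+k\mu$ and collection of terms by homogeneity, the only cosmetic difference being that you run strong induction directly from the closed form (\ref{nRnS}), expanding every $g(x_k)$, whereas the paper uses the one-step recursion ${}_{n+1}R={}_nR+{}_nS+g(x_n)$, ${}_{n+1}S={}_nS+g(x_n)$ and Taylor-expands only the newly added $g(x_n)$. One small remark: your convergence argument is restricted to small $|y_0+g_0|$, $|\varepsilon|$, $|\delta|$ via the analyticity strip, while the lemma asserts convergence for all $x_0,y_0,\varepsilon,\delta$ (automatic when $f$ is entire, e.g.\ a trigonometric polynomial as in Theorem \ref{thm:arnoldtongue}); the paper's own proof does not address this point at all, so you are, if anything, more careful than the original.
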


\vskip 0.2 in 




 \noindent {\bf Proof}  goes by  induction on $n$.
The statement is trivially true for $n=1$, and  we show that if the remainders $_nR $ and $_nS $ are of the form  claimed in the Lemma then the same is true for $ n+1 $. Indeed, 

 \[
 x_{n+1}= x_n + y_n + \mu + g(x_n) \buildrel ({\ref{remainder}}) \over = x_0+(n+1) \mu + \underbrace{\,_nR+\,_nS + g(x_n)}_{_{n+1}R}; 
\]
 and, thanks to the inductive assumption, it only remains to show that $ g(x_n) $ is of the form claimed. 
  Taylor expansion   yields

\begin{equation}
	 g(x_n)=g(x_0 + n \mu + \, _nR) =  g_n + \sum_{k\ge1}\frac{1}{k!}g^{(k)}_n \cdot (_nR)^k = g_n+\sum_{k\ge1}\frac{1}{k!}g^{(k)}_n \cdot \left( \sum_{j\ge1}\,_nR_j\right)^k 
	\label{eq:gn}
\end{equation}  
Now   $m$th degree part of  (\ref{eq:gn}) is a linear combination (with constant coefficients) of products 
\[
    g_n^{(k)} \, _nR_{j_1}\cdots\, _nR_{j_r}
\]  
with $ j_1+\cdots + j_r=m-1 $, and is thus a homogeneous polynomial of degree $ m $  as claimed. Furthermore, every term with $ m\geq 2 $ comes from the series in (\ref{eq:gn}) and thus contains a derivative of $ g $. 

The claim about $_{n+1}S $ is proven similarly: 
\[
    y_{n+1}=y_n+g(x_n)\buildrel ({\ref{remainder}}) \over =y_0+\underbrace{_nS+g(x_n)}_{_{n+1}S};
\]  
the rest of the proof is identical to the one above. 
This completes the proof of Lemma \ref{lemma:remainders}.
$\diamondsuit$

\vskip 0.2 in 
\begin{rem}
As an illustration of the lemma,  a short computation gives an explicit form of the degree-2 terms: 
\begin{equation*}
    \begin{split}
      _nR_2&= (n-2)g'_1 \cdot (y_0+ g_0) + (n-3)g'_2 \cdot (2(y_0+ g_0)+g_1) + \cdots\\
         &\quad + g'_{n-1} \cdot ((n-1)(y_0+ g_0)+(n-2)g_1 + \cdots + g_{n-2})\\
         _nS_2&= g'_1 \cdot (y_0+ g_0) + g'_2 \cdot (2(y_0+ g_0)+g_1) + \cdots\\
         &\quad + g'_{n-1} \cdot ((n-1)(y_0+ g_0)+(n-2)g_1 + \cdots + g_{n-2}).
    \end{split}
\end{equation*}
Each term in $_nR_2$, $_nS_2$ contains the first derivative of $g$ at some shift $x_0+k\mu$.
\end{rem}

\vskip 0.5in


\vskip 0.  in \section{The Existence of Periodic Orbits}
In this section we discuss the existence of the $p/q$ periodic orbits of the cylinder map (\ref{eq:cylindermap}) using Implicit Function Theorem and then we prove Theorem \ref{thm:ycurve}.\\

We begin by showing that   the equations 
$$_qR(x ,y , \varepsilon, \delta )=\, _qS(x ,y, \varepsilon, \delta )=0$$ 
uniquely determine $ \delta $ and $y$ as functions of $ x $ and $\varepsilon $ 
for all $ x\in {\mathbb R} $ and for all $ | \varepsilon | < \overline{\overline{\varepsilon}} $ for some positive 
$ \overline{\overline{\varepsilon}} $: 
\begin{equation}\label{eq:dy}
    \begin{cases}
       \delta = \Delta(x,\varepsilon)\\
       y = Y(x,\varepsilon) 
    \end{cases}
\end{equation}
such that $ _qR $ and $ _qS $ vanish identically if (\ref{eq:dy}) hold.

 Wishing to apply the implicit function theorem we note that $ _qR(x_0,0,0,0) = \; _qS(x_0,0,0,0)=0$ for all $x\in {\mathbb R} $
 and that, using Lemma \ref{lemma:remainders}
 \[
    \frac{ \partial ( _qR,\, _qS)}{ \partial ( \delta, y_0)}\biggl|_{y=\varepsilon=\delta=0} = \begin{pmatrix}
      -\dfrac{q(q+1)}{2} & q \\
      -q & 0
   \end{pmatrix}
\]  
 for all $ x $. Since the determinant  $ q ^2 \not=0 $,   the implicit function theorem applies: for any $ x_0$ there exists an open disk $ D_{x_0} $ centered at the
 point $ (x_0,  0 ) $ in the $ ( x, \varepsilon ) $-plane such that the implicit function $ (x, \varepsilon )\mapsto (\delta, y) $ 
 is well defined by the equations $ _qR=\,_qS=0 $ on the disk $ D_{x_0} $.   The segment 
 $ [0, 2  \pi ] \times \{0\} $ in the $ (x, \varepsilon ) $-plane is covered by open disks and thus has a finite subcover; but then this finite union contains a strip  nonzero width 
 $ \overline{\overline{\varepsilon}}> 0  $ around the $ x $--axis. Moreover, the functions corresponding to the overlapping disks   coincide. Thus the implicit function 
 $ (x, \varepsilon )\mapsto (\Delta, Y) $ is defined for all $ x $ and for all $ | \varepsilon | < \overline{\overline{\varepsilon }} $.

\begin{proof}[Proof of Theorem \ref{thm:ycurve}]
The proof proceeds by induction.  If $ (x_0, y_0) $ is a $ p/q $-periodic point of the map (\ref{eq:cylindermap}) for some $ \delta $, then (\ref{eq:dy})  holds; proving the theorem thus amounts to showing that the coefficients    in the expansion of $ Y(x_0, \varepsilon ) $ in powers of $ \varepsilon $  are polynomials in $ f_k $ and its derivatives up to order $ k-1 $. We fix $\varepsilon<\overline{\overline{\varepsilon}}$ so that $ \Delta $ and $ Y $ are well-defined.   
As the first step in induction we show that $\Delta_1$ and $ Y_1 $ in the expansions  
\begin{equation*}
    \begin{split}
        \Delta(x_0) &= \Delta_1(x_0)\varepsilon + o(\varepsilon),\\
        Y(x_0) &= Y_1(x_0)\varepsilon + o(\varepsilon)
    \end{split}
\end{equation*}
are polynomials of degree $1$ (i.e. linear) in $f_k$, where $0\le k\le q-1$. \\

Indeed,   by Lemma \ref{lemma:remainders} 
\[
   _qS(x_0,Y,\varepsilon,\Delta) = \sum_{m\ge1}\; _qS_m(x_0,Y,\varepsilon,\Delta),
\]
where by (\ref{eq:firstorder}) the linear term is 
\[
   _qS_1(x_0,Y,\varepsilon,\Delta) = -q\Delta-\varepsilon q\overline{f}(x_0)   \ \hbox{with} \  \overline{f}(x_0)= \frac{1}{q} \sum_{k=0}^{q-1} f_k,
\]
while each higher-degree term $_qS_m$ ($m\ge2$) is a   homogeneous polynomial of degree $m$  in the items from the list $\{Y-\Delta-\varepsilon f_0,\; -\Delta-\varepsilon f_k, \; \varepsilon f^{(l)}_k \} $ with $  0\le k\le q-1,\; 0\le l \le m-1$ and thus is of order $\mathcal{O}(\varepsilon^2)$ since   $\Delta,Y\sim\mathcal{O}(\varepsilon)$, so that  
\[
   _qS(x_0,Y,\varepsilon,\Delta) = -q\Delta-\varepsilon q\overline{f}(x_0) + o(\varepsilon).
\]
Since the above expression vanishes by the definition of $Y,\Delta$, we conclude that 
\begin{equation}
	 \Delta = - \varepsilon \overline{f}(x_0)+ o(\varepsilon),
	\label{eq:Delta1}
\end{equation}  
so that the leading coefficient of $ \varepsilon $ is 
\[
   \Delta_1(x_0) = -\overline{f}(x_0).
\] 
Similarly, by Lemma \ref{lemma:remainders}, we have 
\[
   _qR(x_0,Y,\varepsilon,\Delta) = \sum_{m\ge1}\; _qR_m(x_0,Y,\varepsilon,\Delta),
\]
where  
\[
   _qR_1(x_0,Y,\varepsilon,\Delta)  \buildrel{(\ref{eq:firstorder})}\over{=} qY-\dfrac{q(q+1)}{2}\Delta-\varepsilon \sum_{k=0}^{q-1} (q-k) f_k , 
\]
while each higher-degree term $_qR_m$ ($m\ge 2$) is a degree-$m$ homogeneous polynomial in the items from the list $\{Y-\Delta-\varepsilon f_0,\; -\Delta-\varepsilon f_k, \; \varepsilon f^{(l)}_k \}$, where $0\le k\le q-1,\; 0\le l \le m-1$; and since $Y,\Delta\sim\mathcal{O}(\varepsilon)$ all  $_qR_m$ with $ m\ge 2 $ are  at most  $\mathcal{O}(\varepsilon^2)$, so that 
\[
    \underbrace{_qR(x_0,Y,\varepsilon,\Delta)}_{=0} = qY-\dfrac{q(q+1)}{2}\Delta-\varepsilon \sum_{k=0}^{q-1} (q-k) f_k + o(\varepsilon).
\]
Substituting into this we obtain $  Y(x_0,\varepsilon) = Y_1(x_0)\varepsilon + o(\varepsilon)$ and (\ref{eq:Delta1}) results in   
\[
   Y_1(x_0) = -\frac{q+1}{2}\overline{f}(x_0) + \overline{\overline{f}}(x_0) ,
\]
where $\displaystyle \overline{\overline{f}}(x_0)=\dfrac{1}{q}\sum_{k=0}^{q-1} (q-k) f_k$.\\
This completes the first step of induction. To carry out the $n$th inductive step, let $ n>1 $ and assume that in the expansion
\begin{equation}\label{eq:case n}
    \begin{split}
        \Delta(x_0) &= \Delta_1(x_0)\varepsilon+\cdots+\Delta_n(x_0)\varepsilon^n + \ldots,\\
        Y(x_0)&=Y_1(x_0)\varepsilon+\cdots+Y_n(x_0)\varepsilon^n + \ldots
    \end{split}
\end{equation}
each  $\Delta_m(x_0)$ and $Y_m(x_0)$ with $ m\le n $  is a  polynomial of degree $m$ in $f_k^{(l)}$ with $0\le k\le q-1$, $0\le l \le m-1$. Our goal is  show that then the coefficients $\Delta_{n+1}$ and $Y_{n+1}$   are polynomials of degree $n+1 $ in $f_k^{(l)}$  with $0\le k\le q-1$, $0\le l \le n$.\\

Just like in the first step, we obtain   obtain $\Delta_{n+1}$ by extracting the coefficient of $ \varepsilon^{n+1}$ in  $_qS$: 
\[
   0 =\; _qS(x_0,Y,\varepsilon,\Delta) =\; _qS_1(x_0,Y,\varepsilon,\Delta) +  \sum_{m\ge2}^{n+1}\; _qS_m(x_0,Y,\varepsilon,\Delta) +  \sum_{m>n+1}\; _qS_m(x_0,Y,\varepsilon,\Delta). 
\]

For $m>n+1$, $qS_m(x_0,Y,\varepsilon,\Delta) \sim o(\varepsilon^{n+1})$, and hence the last sum does not contribute powers of degree $ n+1 $. On the other hand, since
\[
   _qS_1(x_0,Y,\varepsilon,\Delta) = -q\Delta-\varepsilon q\overline{f}(x_0), 
\]
 $_qS_1$ contributes  $-q\Delta_{n+1}(x_0)\varepsilon^{n+1}$, a {\it  constant} multiple of $ \Delta_{n+1}$. It thus suffices to show that  the  terms in the middle sum are polynomials of degree up to $ n+1 $ in terms of $ f^{(l)}_k $ with $ 0\le k\le q-1,\; 0\le l \le n $. 
 
For each $m\in[2,n+1]$, $_qS_m(x_0,Y,\varepsilon,\Delta)$ is a degree-$m$ homogeneous polynomial in the items from the list $\{Y-\Delta-\varepsilon f_0,\; -\Delta-\varepsilon f_k, \; \varepsilon f^{(l)}_k \} $ with $ 0\le k\le q-1,\; 0\le l \le m-1$. Thus by the inductive assumption, the coefficient of $ \varepsilon^{n+1} $  in $_qS_m$ $(2\le m\le n+1)$ is a linear combination of the  terms 
\[
   (Y_i\varepsilon^i)^{m_i}(\Delta_j\varepsilon^j)^{m_j}(f_k^{(l)}\varepsilon)^{m_s}
\]
with $i m_i + j m_j + m_s =n+1$, $0\le l\le m-1$.

Since by Lemma \ref{lemma:remainders}  each higher-degree term $_qS_m$ ($m\ge2$) has at least one derivative of $g$, it follows that $m_s\ge1$ and consequently $i,j\le n$, so that the inductive assumption applies to $ Y_i $ and $ \Delta_j $ above. And thus 
$Y_i$ is a  polynomial of degree $i$   in $f_k^{(l)}$ with $ 0\le k\le q-1 $ with $  0\le l\le i-1$, and similarly $\Delta_j $ is a polynomial of degree $j$  with $0\le k\le q-1,\; 0\le l\le j-1$. Therefore $(Y_i)^{m_i}(\Delta_j)^{m_j}(f_k^{(l)})^{m_s}$ is a polynomial of degree $n+1$  in $f_k^{(l)}$ with  $0\le k\le q-1,\; 0\le l\le n$. This completes the inductive step for $ \Delta_{n+1} $. The step for $ Y_{n+1} $ is carried out in the same way: we have \[
   0 =\; _qR(x_0,Y,\varepsilon,\Delta) =\; _qR_1(x_0,Y,\varepsilon,\Delta) +  \sum_{m\ge2}^{n+1}\; _qR_m(x_0,Y,\varepsilon,\Delta) +  \sum_{m>n+1}\; _qR_m(x_0,Y,\varepsilon,\Delta).
\]
Just as before,  the last sum does not contribute to the coefficient of $\varepsilon^{n+1} $. The first term 
\[
   _qR_1(x_0,Y,\varepsilon,\Delta) = qY-\dfrac{q(q+1)}{2}\Delta-\varepsilon q\overline{\overline{f}}(x_0),   
\]
  contributes 
\begin{equation}
	  \left(qY_{n+1}-\dfrac{q(q+1)}{2}\Delta_{n+1}\right)\varepsilon^{n+1}. 
	\label{eq:DY}
\end{equation}  
The coefficients of $ \varepsilon^{n+1} $ in the middle sum are polynomials of degree at most $n+1$ in terms of $f$, its shifts and its derivatives up to order $n$, precisely as we proved before when treating $\Delta_{n+1}$. This shows that the coefficient in  (\ref{eq:DY}) is a polynomial of degree at most $n+1$ in terms of $f$, its shifts and its derivatives up to order $n$. Since the same is true for $\Delta_{n+1} $, this holds for 
$Y_{n+1}$ as well, thus completing the induction step and the proof of Theorem \ref{thm:ycurve}.

\end{proof}

  \section{The Periodicity Lemma}
 
In this section we show that the leading $x${\it -dependent} coefficient in the $ \varepsilon $-expansion of $ \Delta (x, \varepsilon ) $    is periodic of period $ \mu$. This fact plays a key role in the proof of Theorem \ref{thm:arnoldtongue}. Before proving this periodicity we show that this leading coefficient is  also the leading term up to a constant factor in $_qR $ and $_qS $  as well, another crucial fact.

Let $r\ge1$ be the smallest power of  $\varepsilon$ where $x$ first appears in the coefficient of the   expansion of $ \Delta$ in powers of $\varepsilon $, so that
\begin{align}\label{eq:expdelta}
    \Delta(x_0,\varepsilon) &= A(\varepsilon) + \Delta_r(x_0)\varepsilon^{r} + o(\varepsilon^{r})
\end{align}
where $A(\varepsilon)$ is a polynomial in $\varepsilon$ of degree at most $r-1$ with constant coefficients. 
We claim that replacing $ \Delta  $ with its constant part $ A( \varepsilon ) $ in 
$ _qR(x_0,Y,\varepsilon,\Delta)=0 $ and $ _qS(x_0,Y,\varepsilon,\Delta) =0$ changes these from $ 0 $ by the amount proportional to $ \Delta_r(x_0) $ in the leading order: 
\begin{equation}\label{nRprincipal} _qR(x_0,Y,\varepsilon,A(\varepsilon)) =  \dfrac{q(q+1)}{2} \Delta_r(x_0) \varepsilon^r + o(\varepsilon^r) 
\end{equation}
and
\begin{equation}\label{nSprincipal} _qS(x_0,Y,\varepsilon,A(\varepsilon)) = q \Delta_r(x_0) \varepsilon^r + o(\varepsilon^r); 
\end{equation}
here $ Y=Y(x_0, \varepsilon ) $. 
Indeed, by Lemma \ref{lemma:remainders} we have  
\[
    _qR(x_0,Y,\varepsilon,\Delta) - \; _qR(x_0,Y,\varepsilon,A(\varepsilon)) 
    = \sum_{m\ge 1} \; _qR_m(x_0,Y,\varepsilon,\Delta) - \; _qR_m(x_0,Y,\varepsilon,A(\varepsilon)).
\]

Starting with $ m=1 $, the terms  $ _qR_1 $ and $ _qS_1 $  are linear in $ \delta $ with constant coefficients, according to (\ref{eq:firstorder}); more precisely,  
\begin{align*}
    &\quad_qR_1(x_0,Y,\varepsilon,\Delta) - \;  _qR_1(x_0,Y,\varepsilon,A(\varepsilon)) \\
    &  = q(Y-\Delta-\varepsilon f_0) + \sum_{k=1}^{q-1} (q-k) (-\Delta-\varepsilon f_k) \\
    &\quad - q(y_0-A(\varepsilon)-\varepsilon f_0) - \sum_{k=1}^{q-1} (q-k) (-A(\varepsilon)-\varepsilon f_k)\\
    &= -q\Delta_r(x_0)\varepsilon^r - \sum_{k=1}^{q-1} (q-k) \Delta_r(x_0)\varepsilon^r + o(\varepsilon^r)\\
    &= -\dfrac{q(q+1)}{2} \Delta_r(x_0) \varepsilon^r + o(\varepsilon^r).
\end{align*}
To complete the proof of (\ref{nRprincipal}) it suffices to show that      
\begin{equation}
	   _qR_m(x_0,Y,\varepsilon,\Delta) - \; _qR_m(x_0,Y,\varepsilon,A(\varepsilon)) = \mathcal{O}(\varepsilon^{r+1}) \ \ \hbox{for} \ \ m\ge 2.
	\label{eq:RR}
\end{equation}  
 
By Lemma \ref{lemma:remainders},  $_q R_m(x_0,Y,\varepsilon,\delta)$  is a  homogeneous polynomial of  degree $ m $ in the  items from the list $\{Y+g_0,g_1,\cdots,g_{q-1}, g^{(l)}_k\} $ with  $ 1\le l \le m-1, \; 0\le k\le q-1$ and it contains at least one derivative $g^{(l)}_k$ for some $1\le l \le m-1, \; 0\le k\le q-1$, thus contributing an extra factor of $\varepsilon$. Since $g(x_0;\varepsilon,\delta)=-\delta-\varepsilon f(x_0)$, replacing  $\delta =\Delta=A+\Delta_r \varepsilon^r +o(\varepsilon^{r})$ by $ A ( \varepsilon )  $ changes $_qR_m $ by   $\mathcal{O}(\varepsilon^r)\cdot \varepsilon= \mathcal{O}(\varepsilon^{r+1})$, thus completing the proof of (\ref{nRprincipal}). 
The proof of (\ref{nSprincipal}) is identical and therefore omitted.

\begin{lemma}[Periodicity]\label{lemma:periodicity}
For all sufficiently small $ \varepsilon $ the leading $x$-dependent coefficient  $\Delta_r$ in the expansion (\ref{eq:expdelta})  of $ \Delta $  is periodic in $\mu$: for any $x$ we have  $$\Delta_r(x+\mu)=\Delta_r(x).$$
\end{lemma}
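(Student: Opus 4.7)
The plan is to exploit the fact that a $p/q$-periodic orbit is a cyclic set: if $(x_0, y_0)$ lies on such an orbit for some value of $\delta$, then its image $(x_1, y_1)$ under the cylinder map (\ref{map:main}) lies on the \emph{same} orbit for the \emph{same} $\delta$. This invariance, combined with the uniqueness of the implicit function $\Delta(x,\varepsilon)$, will furnish a functional equation from which periodicity of the leading $x$-dependent coefficient follows by matching powers of $\varepsilon$.

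First I would define the ``shift-along-the-orbit" map
\[
   \phi(x,\varepsilon) := x + \mu + Y(x,\varepsilon) + g(x;\varepsilon,\Delta(x,\varepsilon)),
\]
which is the $x$-coordinate of the first iterate when starting on the periodic orbit parametrized by $(x,Y(x,\varepsilon))$ with drift $\Delta(x,\varepsilon)$. Because $Y = \mathcal{O}(\varepsilon)$ and $g = \mathcal{O}(\varepsilon)$, we have
\[
   \phi(x,\varepsilon) = x + \mu + \mathcal{O}(\varepsilon).
\]
Now the image point $(\phi(x,\varepsilon), y_1)$ is itself the starting point of a $p/q$-periodic orbit with drift $\Delta(x,\varepsilon)$. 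By the uniqueness of the implicit function (the implicit function theorem argument preceding Theorem \ref{thm:ycurve}), $\Delta$ is the \emph{only} value of $\delta$ at $x = \phi(x,\varepsilon)$ producing such an orbit, whence the identity
\[
   \Delta(\phi(x,\varepsilon),\varepsilon) = \Delta(x,\varepsilon), \qquad \text{for all } x \in \mathbb{R}, \; |\varepsilon|<\overline{\overline{\varepsilon}}.
\]

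Next I would extract the periodicity from this identity by expanding both sides in $\varepsilon$. Substituting (\ref{eq:expdelta}) into the right side gives $A(\varepsilon) + \Delta_r(x)\varepsilon^r + o(\varepsilon^r)$. For the left side, I use $\phi(x,\varepsilon) = x + \mu + \mathcal{O}(\varepsilon)$ and Taylor-expand $\Delta_r$ at $x + \mu$:
\[
   \Delta_r(\phi(x,\varepsilon)) = \Delta_r(x+\mu) + \mathcal{O}(\varepsilon),
\]
so that
\[
   \Delta(\phi(x,\varepsilon),\varepsilon) = A(\varepsilon) + \Delta_r(x+\mu)\varepsilon^r + o(\varepsilon^r).
\]
The $A(\varepsilon)$ terms cancel on both sides, and equating coefficients of $\varepsilon^r$ yields $\Delta_r(x+\mu) = \Delta_r(x)$, as claimed.

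The main subtlety lies in justifying that the ``$o(\varepsilon^r)$" remainders really are $o(\varepsilon^r)$ uniformly in $x$, and that $\phi$ is a well-defined real-analytic function on the relevant strip $\{|\varepsilon| < \overline{\overline{\varepsilon}}\}$. Both facts follow from the analyticity of $f$, of the implicit functions $Y$ and $\Delta$, and from the observation that $\phi(x,\varepsilon) - (x+\mu)$ is analytic in $\varepsilon$ and vanishes at $\varepsilon = 0$; shrinking $\overline{\overline{\varepsilon}}$ if necessary keeps all expansions valid. Once this is in place, the matching of powers is routine and produces the desired $\mu$-periodicity.
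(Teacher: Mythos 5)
Your proof is correct, and it shares the paper's key first step while finishing by a genuinely shorter route. Like the paper, you observe that the image $(x_1,y_1)$ of a periodic point is again a $p/q$-periodic point for the same drift, and that the local uniqueness in the implicit function theorem then forces $\Delta(x_1,\varepsilon)=\Delta(x,\varepsilon)$ and $Y(x_1,\varepsilon)=y_1$ — this is exactly the paper's identity (\ref{eq:YD}). From there the paper does not expand this identity directly: it transfers the statement to the remainder function via (\ref{nSprincipal}), expressing $q\Delta_r(x)\varepsilon^r$ as $_qS(x,Y(x),\varepsilon,A(\varepsilon))$ up to $o(\varepsilon^r)$, and then shows that this quantity changes only by $o(\varepsilon^r)$ between $x_0$ and $x_1$ by comparing the true orbit (drift $\Delta(x_0)$) with an auxiliary orbit run with the constant drift $A(\varepsilon)$, i.e.\ the estimates (\ref{eq:first}) and (\ref{eq:second}). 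You bypass the auxiliary orbit and both estimates: once $\Delta(\phi(x,\varepsilon),\varepsilon)=\Delta(x,\varepsilon)$ and $\phi(x,\varepsilon)=x+\mu+O(\varepsilon)$ are in hand, Taylor expansion of (\ref{eq:expdelta}) gives the claim at once. The price is that you need $\Delta_r$ to be Lipschitz and the $o(\varepsilon^r)$ remainder in (\ref{eq:expdelta}) to be controlled at the $\varepsilon$-dependent argument $\phi(x,\varepsilon)$, i.e.\ locally uniformly in $x$; both follow from the analytic implicit function theorem ($\Delta$ and $Y$ are jointly analytic and $2\pi$-periodic in $x$), and the paper itself takes a step of the same nature when it passes from (\ref{eq:BB}) to (\ref{eq:dS}), so this is not an extra hypothesis, only something to state. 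Two small points you should make explicit: the uniqueness you invoke is local, so, as the paper does, note that $|y_1|\le c\varepsilon$ and $|\Delta(x,\varepsilon)|=O(\varepsilon)$, hence for $\varepsilon$ small the pair $(\Delta(x,\varepsilon),y_1)$ lies inside the neighborhood where the implicit solution at $x=\phi(x,\varepsilon)$ is unique (this is where the further restriction on $\varepsilon$ comes from); and your phrase that $\Delta$ is ``the only value of $\delta$ producing such an orbit'' must be read in that local sense, since far-away periodic orbits are not excluded. With those caveats spelled out, your argument is a valid and leaner proof of the lemma; the paper's longer detour buys only that it works entirely at the level of the explicit remainder polynomials of Lemma \ref{lemma:remainders} rather than invoking regularity of the implicitly defined $\Delta_r$.
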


\begin{proof}
We {\it  fix} an initial point $(x_0,y_0=Y(x_0,\varepsilon))$ and set $\delta=\Delta(x_0, \varepsilon )$ in the cylinder map (\ref{map:main}) (and consequently  $g(x_0;\varepsilon,\delta)=-\Delta(x_0) - \varepsilon f(x)$).  

For future use we observe (dropping $ \varepsilon $ from the notation for the sake of brevity) that 
\begin{equation}
	     Y(x_1 )=y_1, \ \ \Delta (x_1 ) = \Delta(x_0 )
	\label{eq:YD}
\end{equation}  
for all sufficiently small $ \varepsilon $. Indeed,    the orbit  $(x_0,y_0=Y(x_0)),(x_1,y_1),(x_2,y_2),\cdots$   is $q$-periodic under the map with $\delta=\Delta(x_0, \varepsilon )$ by the definition of $ Y $ and $ \Delta $; thus     $_qR $ and $ _qS $ vanish at any point of this orbit, and in particular at $ (x_1, y_1) $: 
\begin{equation}
	  _qR(x_1,y_1,\varepsilon,\Delta(x_0))=\; _qS(x_1,y_1,\varepsilon,\Delta(x_0))=0.
	\label{eq:RS1}
\end{equation} 
On the other hand, by the definition of $ Y$ and $ \Delta $ 
\begin{equation}
	  _qR(x_1,Y(x_1),\varepsilon,\Delta(x_1))=\; _qS(x_1,Y(x_1),\varepsilon,\Delta(x_1))=0; 
	\label{eq:RS2}
\end{equation} 
provided the conditions of the implicit function theorem in Section 5 are satisfied, the solution is unique, and thus comparison of (\ref{eq:RS1}) and (\ref{eq:RS2}) implies (\ref{eq:YD}). 
The conditions of the implicit function theorem are satisfied if $ \varepsilon $ is restricted to be sufficiently small - more precisely, so that $ |y_1 |< \overline{\overline{\varepsilon}} $. To that end we note that   
\[
    | y_1| =| Y(x_0)-\Delta(x_0)-\varepsilon f(x_0)| \leq c\varepsilon 
\]
where $c $ is a constant depending only on $ q $ and on $ \max |f| $. It thus suffices to restrict $ \varepsilon $ to $ \varepsilon < \bar{\varepsilon}:= \min (\overline{\overline{\varepsilon}}/c, \overline{\overline{\varepsilon}}) $, which we do from now on. 

We now proceed with the rest of the proof. 
Recalling that $ y_0=Y(x_0) $ we have
\begin{equation}
      _qS(x,Y(x),\varepsilon,A(\varepsilon)) \bigg|_{x=x_0} ^{x=x_1}\buildrel \ref{nSprincipal} \over {=} q (\Delta_r(x_1)-\Delta_r(x_0)) \varepsilon^r + o(\varepsilon^r). 
    \label{eq:BB}
\end{equation} 
  
And since  $x_1 = x_0+ \mu + y_0- \Delta (x_0) - \varepsilon f(x_0) = x_0+ \mu + O( \varepsilon ) $, this implies 
\begin{equation}
    _qS(x_1,Y(x_1),\varepsilon,A(\varepsilon)) -\, _qS(x_0,y_0,\varepsilon,A(\varepsilon)) = q(\Delta_r(x_0+ \mu )-\Delta_r(x_0)) \varepsilon^r + o( \varepsilon ^r). 
    \label{eq:dS}
\end{equation} 
We now show that the left-hand side is $o( \varepsilon^r) $ (thus completing the proof of the lemma).  Consider the orbit $ (\tilde x_k, \tilde y_k) $ of the same initial point $ (x_0, y_0) $ but under the map with $ \delta = A( \varepsilon ) $ (instead of $ \delta = \Delta (x_0) $).    We will   show that 
\begin{equation}
    _qS(\tilde x_1,\tilde y_1,\varepsilon,A(\varepsilon)) - \, _qS(x_0,y_0,\varepsilon,A(\varepsilon)) =   o( \varepsilon ^r) 
    \label{eq:first}
\end{equation} 
and 
\begin{equation}
    _qS(  x_1,Y(x_1),\varepsilon,A(\varepsilon)) - \, _qS(\tilde x_1,\tilde y_1,\varepsilon,A(\varepsilon)) =   o( \varepsilon ^r) ,
    \label{eq:second}
\end{equation} 
 thus implying that the left-hand side of \ref{eq:BB}  is $ o( \varepsilon ^r) $. 
\vskip 0.1 in 
\noindent {\bf  Proof of (\ref{eq:first}).}  By   (\ref{nRnS}) we have 
\begin{align*}
    &\quad _qS(\tilde x_1,\tilde y_1,\varepsilon,A(\varepsilon)) -\, _qS(x_0,y_0,\varepsilon,A(\varepsilon))\\
    &= -q A(\varepsilon) -\varepsilon \sum_{k=0}^{q-1} f(\tilde x_k) + q A(\varepsilon) + \varepsilon \sum_{k=1}^{q} f(\tilde x_k)\\
    &= -\varepsilon (f(\tilde x_q) - f(x_0))  . 
\end{align*}
But  
\begin{equation*}
        \tilde x_q \buildrel (\ref{remainder}) \over {=}  x_0 + 2p\pi + \, _qR\left(x_0,y_0, \varepsilon, A(\varepsilon)\right),
\end{equation*}
which together with (\ref{nRprincipal}) shows that the last difference in parentheses is $ O( \varepsilon^r)$, thus implying \ref{eq:first}. 
\vskip 0.1 in 
\noindent {\bf  Proof of (\ref{eq:second}).}
By Lemma \ref{lemma:remainders} 
\[
   _qS(\tilde x_1,\tilde y_1,\varepsilon,A(\varepsilon)) =  \sum_{m\ge1} \; _qS_m(\tilde x_1,\tilde y_1,\varepsilon,A(\varepsilon)) 
\]
and 
\[
   _qS(x_1,y_1,\varepsilon,A(\varepsilon)) = \sum_{m\ge1}\; _qS_m(x_1,y_1,\varepsilon,A(\varepsilon)).
\]
where $_qS_m(x,y,\varepsilon,A(\varepsilon))$ is a degree-$m$ homogeneous polynomial in the items from the list $\{  y-A(\varepsilon)-\varepsilon f(  x),\; -A(\varepsilon)-\varepsilon f( x + k\mu),\;  \varepsilon f^{(l)}(\  x+k\mu)\;\} $ with $  0\le k \le q-1,\ 1\le l \le m-1$.
We will show that the corresponding terms in each sum differ by $o(\varepsilon^r)$. We have 
\begin{align*}
    x_1 &= x_0 + y_0 + \mu - \Delta(x_0) - \varepsilon f(x_0)\\
    y_1 &= y_0 - \Delta(x_0) - \varepsilon f(x_0);
\end{align*}
and 
\begin{align*}
    \tilde x_1 &= x_0 + y_0 + \mu - A(\varepsilon) - \varepsilon f(x_0)\\
    \tilde y_1 &= y_0 - A(\varepsilon) - \varepsilon f(x_0).
\end{align*}
Since  $\Delta(x_0)=A(\varepsilon)+\Delta_r(x_0)\varepsilon^r+o(\varepsilon^r)$, this implies
\begin{equation}\label{eq:difference}
    \begin{split}
    \tilde x_1 -x_1 &= \Delta_r(x_0) \varepsilon^r + o(\varepsilon^r)\\
        \tilde y_1 -y_1 &= \Delta_r(x_0) \varepsilon^r + o(\varepsilon^r)\\
        f^{(l)}(\tilde x_1+k\mu) - f^{(l)}(x_1+k\mu) &= f^{(l+1)}( x_1+k\mu) \Delta_r(x_0) \varepsilon^r + o(\varepsilon^r).
    \end{split}
\end{equation}
This  $ O( \varepsilon^r) $ difference in $ x $ and $y $  results in the $ o( \varepsilon ^r) $ difference in the terms $ _qS_m $ as we now show.  Starting with $ m=1 $ we have   
\begin{equation*}
    \begin{split}
        &\quad _qS_1(\tilde x_1,\tilde y_1,\varepsilon,A(\varepsilon)) -\; _qS_1(x_1,y_1,\varepsilon,A(\varepsilon))\; \\
     &\buildrel (\ref{eq:firstorder}) \over {=}   - \varepsilon \sum_{k=0}^{q-1} (f(\tilde x_1+k\mu) - f(x_1+k\mu))
     = -\varepsilon \sum_{k=0}^{q-1} f'(x_1+k\mu) \Delta_r(x_0) \varepsilon^r + o(\varepsilon^r)= o(\varepsilon^r).
    \end{split}
\end{equation*}
 For  $ m\ge 2 $, according to   Lemma \ref{lemma:remainders},  each term in $_qS_m$  contains at least one derivative of $g(x;\varepsilon,A(\varepsilon))=-A(\varepsilon)-\varepsilon f(x)$ for both $_qS_m(x_1,y_1,\varepsilon,A(\varepsilon))$ and $_qS_m(\tilde x_1,\tilde y_1,\varepsilon,A(\varepsilon))$, which contributes a factor of $\varepsilon$. This, together with (\ref{eq:difference}) implies
\[
   _qS_m(\tilde x_1,\tilde y_1,\varepsilon,A(\varepsilon)) -\; _qS_m(x_1,y_1,\varepsilon,A(\varepsilon)) = o(\varepsilon^r). 
\]
 We showed that $ _qS(x_1, y_1, \varepsilon, A ( \varepsilon )) -\, _qS(\tilde x_1,\tilde  y_1, \varepsilon, A ( \varepsilon ))= o( \varepsilon^r) $. And since $ y_1=Y(x_1) $ this proves (\ref{eq:second}), thus completing the proof of the lemma. \end{proof}

\vskip 0.5 in 
 \section{End of proof of the main theorem}
In this last section we complete the proof of  Theorem \ref{thm:arnoldtongue} using the results of the previous sections. 
The main idea, similar to  \cite{Arnold1983}, is to observe that if $ f $ is a trigonometric polynomial of degree $d$ then $ \Delta_r $ is a trigonometric polynomial of degree $ rd$. And since $ \Delta_r $ is nonconstant (by the definition) and periodic of period $ 2 \pi p/q $, one must have $ rd>q $, so that 
$ r> [q/d] $. And this would complete the proof of the theorem, since $ \Delta (x, \varepsilon ) = A( \varepsilon) + \Delta_r(x) \varepsilon^r + o(\varepsilon^r) $ implies that the range of $ \delta $ for which $ p/q $-periodic points exist is at most   $ O( \varepsilon ^r ) $ with $ r> [q/d] $. 

It remains therefore to show  that $ \Delta_r $ is indeed a trigonometric polynomial of degree at most $ rd$. According to (\ref{nSprincipal})  
\[
    \varepsilon ^r \Delta_r(x) = q^{-1} \sum_{m\ge 1}\,_qS_m(x, Y(x, \varepsilon ), \varepsilon, A( \varepsilon ))+o( \varepsilon ^r),
 \] 
and we must show that the coefficient of  $ \varepsilon ^r $  in the above sum is a trigonometric polynomial  of degree at most $rd$. 
According to Lemma \ref{lemma:remainders},
$_qS_m(x,Y(x),\varepsilon,A(\varepsilon))$ is   homogeneous polynomial of degree $ m $ in the items from the list 
\begin{equation}
    \{Y(x)-A(\varepsilon)-\varepsilon f_0,\; -A(\varepsilon)-\varepsilon f_k,\; \varepsilon f^{(l)}_k\} 
    \label{eq:list}
\end{equation} 
 with $0\le k \le q-1, 1\le l \le m-1 $, and thus only finitely many  terms - namely the ones  with $ m\le r $  -  contribute to the coefficient of $ \varepsilon^r $. According to  Theorem \ref{thm:ycurve} the coefficients $ Y_n $ in the expansion  
\[
    Y(x) = Y_1(x)\varepsilon + Y_2(x)\varepsilon^2+\cdots 
\]
are $n^{\rm th}$ degree polynomials in $ f$, its shifts by $ \mu $ and its derivatives. The key point here is that the power of $ \varepsilon $ in $ Y(x)$ is also the degree of the   polynomial $ Y_k $ in $ f $, its derivatives and shifts. In addition,      $ \varepsilon $
enters with power $ 1 $ in the list (\ref{eq:list}) as a factor of every  $f$ and its derivatives and shifts (while $ A( \varepsilon ) $ has constant coefficients). This shows that the coefficient of 
$ \varepsilon^r $ is a polynomial of degree at most $ r $ in 
$f$, its derivatives and shifts. Moreover, this coefficient is a finite sum, since only finitely many terms ($m\le r$) contribute to it. Finally,   since $ f $ is a trigonometric polynomial of degree $d$ it follows that the coefficient of $ \varepsilon^r $ is a trigonometric polynomial of degree at most $ rd$, as claimed. 

This completes the proof of Theorem \ref{thm:arnoldtongue}.

\bibliography{travellingwave}

\end{document}